\newtheorem{theorem}{Theorem}
\newtheorem{lemma}[theorem]{Lemma}
\newtheorem{claim}[theorem]{Claim}
\newtheorem{proposition}[theorem]{Proposition}
\newtheorem{conjecture}[theorem]{Conjecture}
\newtheorem*{1-2-3 conjecture}{1-2-3 Conjecture}
\newtheorem*{elzahar conjecture}{El-Zah\'ar's Conjecture}
\newtheorem*{moon and moser thm}{Moon and Moser's Theorem}
\newtheorem{case}{Case}
\title{Rainbow Pancyclicity in Graph Systems}
\author{Yangyang Cheng$^{a,}$\unskip\thanks{\emph{E-mail address:} mathsoul@mail.sdu.edu.cn }, Guanghui Wang$^{a,}$\unskip\thanks{Corresponding author. \emph{E-mail address:} ghwang@sdu.edu.cn}, Yi Zhao$^{b,}$\unskip\thanks{\emph{E-mail address:} yzhao6@gsu.edu}\\ \\[.5em]
{\small $^a$School of Mathematics,}\\
{\small Shandong University, 250100, Jinan, Shandong, P. R. China}\\
{\small $^b$Department of Mathematics and Statistics,}\\
{\small Georgia State University, Atlanta, GA 30303, USA}\\}
\date{}
\begin{document}
\baselineskip 0.65cm
\maketitle
\begin{abstract}
 Let $G_1,...,G_n$ be graphs on the same vertex set of size $n$, each graph with minimum degree $\delta(G_i)\ge n/2$. A recent conjecture of Aharoni asserts that there exists a rainbow Hamiltonian cycle i.e. a cycle with edge set $\{e_1,...,e_n\}$ such that $e_i\in E(G_i)$ for $1\leq i \leq n$. This can be viewed as a rainbow version of the well-known Dirac theorem.
In this paper, we prove this conjecture asymptotically by showing that for every $\varepsilon>0$, there exists an integer $N>0$, such that when $n>N$ for any graphs $G_1,...,G_n$ on the same vertex set of size $n$ with $\delta(G_i)\ge (\frac{1}{2}+\varepsilon)n$, there exists a rainbow Hamiltonian cycle. Our main tool is the absorption technique. Additionally, we prove that with $\delta(G_i)\geq \frac{n+1}{2}$ for each $i$, one can find rainbow cycles of length $3,...,n-1$.
\end{abstract}
\bigskip
\noindent {\bf Keywords:} Dirac theorem; rainbow Hamiltonian cycle; absorption technique; pancyclicity
\section{Introduction}
Let $G_1,...,G_t$ be $t$ graphs on the same vertex set $V$ of size $n$ where $t$ is a positive integer. We denote the edge set of $G_i$ by $E(G_i)$ and assume that each edge in $E(G_i)$ is coloured by $i$ for $1\leq i \leq t$. Let $S$ be an edge set that is a subset of $\cup_{i=1}^t E(G_i)$ and we say $S$ is \emph{rainbow} if any pair of edges in $S$ have distinct colours. Rainbow Hamiltonian cycles have been studied by many authors. An edge-coloured graph $G$ is \emph{$k$-bounded} if no colour appears more than $k$ times. Erd\H{o}s, Ne\v{s}et\v{r}il and R\"{o}dl \cite{erdos} studied the problem for which $k$ any $k$-bounded $K_n$ contains a rainbow Hamiltonian cycle and they showed that $k$ could be any constant. Hahn and Thomassen \cite{Hahn} demonstrated that $k$ could grow as fast as $n^{\frac{1}{3}}$ and conjectured that the growth of $k$ could in fact be linear. This was confirmed by Albert, Frieze and Reed \cite{Albert}. A recent result from  Coulson and Perarnau \cite{Coulson} further strengthened this by replacing the complete graph with any \emph{Dirac graph}. More precisely, they proved that there exists $\mu>0$ and positive integer $n_0$ such that if $n\geq n_0$ and $G$ is a $\mu n$-bounded edge-coloured graph on $n$ vertices with minimum degree $\delta(G)\geq \frac{n}{2}$, then $G$ contains a rainbow Hamiltonian cycle.
For rainbow Hamiltonian cycles in graph systems, Aharoni et al. \cite{Aharoni} recently gave the following elegant conjecture, which is a natural generalization of Dirac's theorem to the case of graph systems:

\begin{conjecture}\label{rhc}
Given graphs $G_1,...,G_n$ on the same vertex set of size $n$, if each graph has minimum degree at least $\frac{n}{2}$, then there exists a rainbow Hamiltonian cycle.
\end{conjecture}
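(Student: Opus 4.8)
The plan is to deduce the conjecture from the asymptotic version of it established in this paper (minimum degree $(\frac12+\varepsilon)n$, $n$ large) via the \emph{stability method}, splitting into a non-extremal case in which a robust refinement of the absorption argument already works at minimum degree exactly $n/2$, and an extremal case treated directly. Fix constants $0<\gamma\ll\alpha\ll\varepsilon$. Dirac's theorem is tight because of (i) the almost balanced complete bipartite graph and (ii) two almost disjoint cliques of order roughly $n/2$, so I call a system $(G_1,\dots,G_n)$ \emph{$\alpha$-extremal} if there is a single partition $V=A\cup B$ with $\left| |A|-n/2 \right|\le\alpha n$ and a set $I$ of at least $(1-\alpha)n$ indices such that for all $i\in I$ either $e_{G_i}(A)+e_{G_i}(B)\le\alpha n^2$ (the ``bipartite'' type) or $e_{G_i}(A,B)\le\alpha n^2$ (the ``split'' type; note $\delta(G_i)\ge n/2$ still forces $\Omega(n)$ crossing edges in each $G_i$). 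The first step is a dichotomy lemma: either $(G_1,\dots,G_n)$ is $\alpha$-extremal, or a positive fraction of the graphs $G_i$ expand between all linear-sized vertex sets --- quasirandomness enough to run the absorption machinery with no degree surplus.

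In the non-extremal case I would rerun the proof of the asymptotic theorem, replacing every appeal to the $\varepsilon n$ degree surplus by the robustness that being $\gamma$-far from $\alpha$-extremal supplies. That surplus was used to (a) build a rainbow absorbing path with the absorbing property, (b) carry out the rainbow almost-cover of the leftover vertices, and (c) connect the pieces into a single rainbow cycle; in each step the needed abundance of rainbow connections between large sets now comes from expansion rather than from degree, a by-now standard if technical substitution. This produces a rainbow Hamiltonian cycle whenever the system is not $\alpha$-extremal.

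It remains to treat the two extremal types (and systems mixing them). In the bipartite type, most edges of most $G_i$ cross between $A$ and $B$, so --- after setting aside the $o(n)$ exceptional indices and rerouting the $o(n)$ exceptional vertices through their large internal degree --- it suffices to find a rainbow Hamiltonian cycle that uses crossing edges; this is a balanced bipartite rainbow Dirac problem, each relevant $G_i[A,B]$ having minimum degree about $|A|$ to each side, which one settles by a rainbow-matching/rotation argument (inserting a few non-crossing edges to absorb any imbalance between $|A|$ and $|B|$), using the sharp degree bound. In the split type, almost every $G_i[A]$ and $G_i[B]$ is near-complete while still carrying a crossing edge at every vertex, so one builds a rainbow Hamiltonian path inside $A$, a rainbow Hamiltonian path inside $B$ with a disjoint colour set, and two crossing edges of two fresh colours, choosing the path endpoints and crossing edges together; the density of $G_i[A]$ and $G_i[B]$ leaves enough freedom to make all these choices compatible.

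The main obstacle is precisely this extremal analysis, and within it the task of making the dichotomy robust against a sublinear set of graphs $G_i$ and vertices that ignore the chosen bipartite/split structure, as well as controlling any imbalance between $|A|$ and $|B|$: such exceptions must be woven into the cycle using their large internal degree while the entire edge set remains rainbow, which couples the colour budget to the routing and is where $\delta(G_i)\ge n/2$ is genuinely delicate --- not a single edge or colour can be wasted. A secondary issue is that the stability method only covers large $n$, so to obtain the conjecture for all $n$ one must either handle small $n$ by a separate direct argument (feasible, since $\delta(G_i)\ge n/2$ is very restrictive there) or rerun the entire argument with explicit, non-asymptotic constants.
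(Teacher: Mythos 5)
First, a point of order: the statement you were asked to prove is Conjecture~\ref{rhc}, which the paper does \emph{not} prove. The paper establishes only the asymptotic version (Theorem~\ref{main}, with minimum degree $(\frac12+\varepsilon)n$) and explicitly notes that the full conjecture was subsequently proved by Joos and Kim by a different method. So there is no in-paper proof to compare against, and your proposal must stand on its own as a proof of the exact $n/2$ statement.

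It does not. What you have written is a research programme, not a proof: every step that goes beyond Theorem~\ref{main} is deferred. Concretely: (a)~the dichotomy lemma is asserted, not proved, and for a \emph{system} of $n$ graphs it is genuinely nontrivial --- each $G_i$ near the degree threshold could be extremal with respect to its own partition $A_i\cup B_i$, and the claim that a single partition $V=A\cup B$ serves $(1-\alpha)n$ of the indices simultaneously is itself a substantial lemma you do not supply. (b)~In the non-extremal case you say the $\varepsilon n$ surplus can be replaced by expansion ``by a by-now standard if technical substitution,'' but the paper's count of absorbing paths (Claim~\ref{cl1}) rests on $|N_j(v_2)\cap N_j(x_2)|\ge 2\varepsilon n-1$, which is exactly the surplus; at $\delta=n/2$ this intersection can be empty (e.g.\ in $K_{n/2,n/2}$ with $v_2,x_2$ on opposite sides), so the absorbing structure itself must be redesigned, not merely re-counted. (c)~The two extremal cases are each rainbow Hamiltonicity problems at a sharp threshold with a global colour constraint (all $n$ colours used exactly once), and your sketches --- ``a rainbow-matching/rotation argument,'' ``choosing the path endpoints and crossing edges together'' --- do not engage with the coupling between colour budget and routing that you yourself identify as the delicate point. (d)~Small $n$ is unhandled. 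In short, the proposal reduces the conjecture to several unproved statements, at least two of which (the system-level stability lemma and the extremal analysis) are of comparable difficulty to the conjecture itself; no new ground is actually gained beyond what Theorem~\ref{main} already provides.
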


There have been several papers studying other rainbow structures in graph systems. For example, a well-known conjecture of Aharoni and Berger \cite{Aharoni2} asserts that if $M_1, \dots, M_n$ are $n$ matchings of size at least $n+1$ on the same vertex set $V=X\cup Y$ where $X$ and $Y$ are disjoint and all edges of $M_i$ are between $X$ and $Y$, then there exists a rainbow matching of size $n$.
This conjecture generalizes the famous Brualdi-Stein Conjecture, which asserts that every $n\times n$
Latin square has a partial transversal of size $n-1$.
The Aharoni-Berger Conjecture has been confirmed asymptotically by Pokrovskiy \cite{Pok}. For more details about this topic, see \cite{Wanless}.

In this paper, we prove an asymptotic version of Conjecture \ref{rhc}:

\begin{theorem}\label{main}
For every $\varepsilon>0$, there exists an integer $N>0$, such that when $n>N$ for any graphs $G_1,...,G_n$ on the same vertex set of size $n$, each graph with minimum degree $\delta(G_i)\ge (\frac{1}{2}+\varepsilon)n$, there exists a rainbow Hamiltonian cycle.
\end{theorem}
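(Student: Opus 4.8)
The plan is to prove Theorem~\ref{main} by the \emph{absorption method}, the twist being that the colouring has to be tracked carefully enough that every one of the $n$ colours ends up used exactly once. Fix constants $\gamma\ll\delta\ll\beta\ll\varepsilon$ and an integer $\ell=\ell(\varepsilon)$. By routine random choices and a union bound I would first pass to a setting with a small ``buffer'' vertex set $R_0\subseteq V$, $|R_0|=2\gamma n$, such that after deleting any subset of $R_0$ every vertex of $V$ still has at least $(\tfrac12+\tfrac{\varepsilon}{2})n$ neighbours in each of the $n$ graphs. \textbf{Step 1 (rainbow connecting lemma):} for any distinct $u,v$, any colour set $C$ with $|C|\ge\gamma n$, and any vertex set $Z$ of size at most $3\gamma n$ with $u,v\notin Z$, there is a rainbow $u$--$v$ path of length at most $\ell$ avoiding $Z$ and coloured from $C$. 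This is the usual breadth-first expansion: the set of vertices reachable from $u$ by short rainbow paths using a few fresh colours of $C$ and avoiding $Z$ grows past $n/2$ within $O(1/\varepsilon)$ rounds, since each round the frontier vertices have $(\tfrac12+\varepsilon)n$ neighbours in many colours while only $o(n)$ vertices and colours have been spent; intersecting this with the analogous set from $v$ finishes it. The lemma both strings gadgets together and closes the final path into a cycle.

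\textbf{Step 2 (absorbing structure).} Call an edge $ab$ of a rainbow path, coloured $\alpha$, a $(v,r)$-\emph{absorber} if $\{av\in E(G_\alpha),\,vb\in E(G_r)\}$ or $\{av\in E(G_r),\,vb\in E(G_\alpha)\}$; then the segment $\dots ab\dots$ reroutes to $\dots avb\dots$, absorbing $v$ and consuming exactly the new colour $r$. I would pick $m:=\delta n$ pairwise vertex-disjoint edges $e_1,\dots,e_m$ with $e_i\in E(G_{\alpha_i})$ and the $\alpha_i$ distinct (easy greedily, each $G_{\alpha_i}$ having min degree $\gg m$), the choices randomized so that, whp, for every vertex $v$ outside these edges and every colour $r$ a positive fraction of the $e_i$ are $(v,r)$-absorbers. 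Join $e_1,\dots,e_m$ consecutively via Step~1 (fresh colours for the connectors) into one rainbow path $P_{\mathrm a}$, set $V_{\mathrm a}:=V(P_{\mathrm a})$, let $C_{\mathrm a}$ be the colours used by $P_{\mathrm a}$ plus $t:=\gamma n$ further reserved colours, and $C_{\mathrm c}:=[n]\setminus C_{\mathrm a}$. Then $P_{\mathrm a}$ has the absorbing property: for every $t$-set $R$ disjoint from $V_{\mathrm a}$ there is a rainbow path on $V_{\mathrm a}\cup R$ with the same ends as $P_{\mathrm a}$ using exactly $C_{\mathrm a}$. Indeed, process $R$ one vertex at a time: to absorb $v$, take an as-yet-unused reserved colour $r$ and an unused $e_i$ that is a $(v,r)$-absorber (at least $\Omega(m)-t>0$ remain since $m\gg t$) and reroute; because $|R|=t$ and the slack stays positive through the last step, all $t$ reserved colours get used.

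\textbf{Step 3 (almost-cover and finish).} With $V_{\mathrm a}$ and $R_0$ set aside, greedily grow a rainbow path $Q$ through $V\setminus(V_{\mathrm a}\cup R_0)$ using colours of $C_{\mathrm c}$: a current endpoint $x$ has $\ge(\tfrac12+\varepsilon)n$ neighbours in the graph of some unused colour, beating the $\le(\tfrac12+\tfrac{\varepsilon}{2})n$ forbidden vertices, so $Q$ extends; when only $O(1)$ vertices of $V\setminus(V_{\mathrm a}\cup R_0)$ remain, fold them in via Step~1, then continue $Q$ into $R_0$ until exactly a $t$-set $R\subseteq R_0$ is left uncovered. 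Using Step~1 (internal vertices from $R_0$, colours from $C_{\mathrm c}$), join the ends of $Q$ to the ends of $P_{\mathrm a}$ to get a rainbow cycle on $V\setminus R$; a direct count (which the sizes above are chosen to make work out) shows it uses every colour of $C_{\mathrm c}$ and exactly the non-reserved colours of $C_{\mathrm a}$. Finally invoke the absorbing property of $P_{\mathrm a}$ to swallow $R$, consuming precisely the $t$ reserved colours. The result is a rainbow cycle on all $n$ vertices using all $n$ colours.

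The main obstacle is Step~2 together with the colour accounting that ties it to Step~3. Unlike ordinary Hamilton-cycle absorption, or rainbow Hamilton cycles inside a single edge-coloured Dirac graph (where one has $\Theta(n)$ interchangeable copies of each colour, cf.\ \cite{Coulson}), here there is no colour slack at all: the absorber must consume \emph{exactly} the pre-reserved colours, and one must verify that the greedy matching of each leftover vertex to a (reserved colour, absorber-edge) pair never gets stuck, for \emph{every} possible leftover $t$-set. Arranging simultaneously (i) enough absorber-edges per vertex--colour pair — which is what the randomized choice of the $e_i$ and of the colouring of $P_{\mathrm a}$ is for — (ii) that all connection steps cost only a negligible, precisely budgeted set of extra vertices and colours, and (iii) that exactly a $t$-subset of the buffer $R_0$ survives the covering step, is where essentially all the work lies; Steps~1 and 3 are otherwise routine given the $\varepsilon n$ surplus in the degrees.
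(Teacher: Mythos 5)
Your overall framework (random gadgets, union bound, connect, cover, absorb) is the same absorption philosophy as the paper, but your absorber is genuinely different: you absorb leftover vertices one at a time, each absorption consuming one pre-reserved colour, whereas the paper builds a short rainbow cycle $C$ whose gadgets absorb an \emph{entire rainbow path} $P$ in one step at the cost of exactly one fresh colour $s$ (replace an edge $u_2u_3$ of colour $j$ by $u_2x_1Px_2u_3$ with $x_1u_2$ coloured $s$ and $x_2u_3$ coloured $j$). That design choice matters, because it determines how hard the covering step is, and this is where your proposal has a genuine gap.

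The problem is Step 3. Your final cycle on $V\setminus R$ has a forced edge count, so the covering path $Q$ (plus connectors) must be rainbow, span \emph{exactly} $V\setminus(V_{\mathrm a}\cup R)$, and use \emph{every} colour of $C_{\mathrm c}$ — there is zero slack. A greedy extension does not get you there: the endpoint $x$ has at least $(\tfrac12+\varepsilon)n$ neighbours in each unused colour, but if all the $G_i$ are (say) the same graph, the union of these neighbourhoods over all unused colours is a single set of size $(\tfrac12+\varepsilon)n$, so greedy can stall while as many as $(\tfrac12-\varepsilon)n$ vertices remain uncovered. ``Fold in the last $O(1)$ vertices via Step 1'' therefore never applies, and even if only a few vertices remained, each Step-1 insertion burns $\Theta(\ell)$ extra vertices and colours that your exact colour accounting has no room for. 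What is missing is precisely a rainbow Hamilton \emph{path} theorem on the residual system. The paper supplies this as Proposition \ref{pro1}: under $\delta(G_i)\ge\frac{n-1}{2}$ with $n$ colours for $n$ vertices there is a rainbow Hamiltonian path, proved by taking a maximum rainbow path, closing it into a rainbow cycle via the rotation-type Lemma \ref{lemma1} (which needs the two spare colours), and extending via connectivity of a spare colour class. With that in hand, the paper's one-shot path absorber is exactly calibrated: the Hamiltonian path on $V\setminus V(C)$ leaves one unused colour, and the absorption consumes exactly that colour. To repair your write-up you would either need to prove such a spanning rainbow path lemma (with prescribed endpoints and palette, which your zero-slack bookkeeping additionally demands), or switch to a whole-path absorber as in the paper; the vertex-by-vertex scheme by itself does not close the argument.
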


After we submitted this paper, Joos and Kim in \cite{Kim} proved Conjecture \ref{rhc} using a method different from ours. Nevertheless, we believe that our approach is of independent interest and could be applied to attack similar problems in hypergraphs.

Furthermore, we show that given $n$ graphs $G_1, \ldots, G_n$ with $\delta(G_i)\ge \frac{n+1}{2}$ for $1\leq i \leq n$, we can find rainbow cycles with all possible lengths except a Hamiltonian one:

\begin{theorem} \label{thm1}
Given graphs $G_1,...,G_n$ on the same vertex set of size $n$, each graph with minimum degree $\delta(G_i)\ge \frac{n+1}{2}$, there exist rainbow cycles of length $3,4,\dots,n-1$.
\end{theorem}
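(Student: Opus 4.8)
The plan is to find, for each target length $\ell$ with $3 \le \ell \le n-1$, a rainbow cycle on $\ell$ vertices using a greedy-plus-rotation argument in the spirit of the classical proof that a Dirac graph is pancyclic. The key structural fact I would first establish is a rainbow analogue of Dirac's theorem with a slightly stronger degree hypothesis: with $\delta(G_i) \ge (n+1)/2$ one should be able to build a rainbow path of any length by a greedy procedure, because when a rainbow path $v_1 v_2 \cdots v_k$ has been constructed using colours from a set $C$ of size $k-1$, the endpoint $v_k$ has at least $(n+1)/2$ neighbours in \emph{every} graph, so in particular in any graph $G_i$ with $i \notin C$ we may extend unless $k$ is already large. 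The real leverage comes from rotations: if $v_k$ is adjacent (in some unused-colour graph) to some $v_j$ on the path, we can rotate to obtain a new rainbow path with a different endpoint, and a counting argument shows that the set of achievable endpoints is large.

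The heart of the argument is the following. Fix $\ell$. First build a rainbow path $P = v_1 \cdots v_\ell$ on $\ell$ vertices; this is easy greedily since at each of the $\ell - 1 \le n-2$ steps we have at least two colours still available and the current endpoint has $\ge (n+1)/2 > 1$ forbidden-free neighbours in each. Now I want to close $P$ into a rainbow cycle of length $\ell$, i.e.\ find an edge between $v_1$ and $v_\ell$ in some graph $G_i$ whose colour $i$ is not among the $\ell - 1$ colours used on $P$. If the unique unused colour $c$ (when $\ell - 1 = n - 1$, i.e.\ $\ell = n$ — but that case is excluded) — for $\ell \le n-1$ there are at least $n - (\ell-1) \ge 2$ unused colours. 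Suppose no such closing edge exists. Then for every unused colour $c$, $v_1 v_\ell \notin E(G_c)$. I then run rotations: using unused colours, rotate the endpoint $v_\ell$ to produce many rainbow paths on the \emph{same} vertex set $\{v_1, \dots, v_\ell\}$ with endpoint $v_1$ fixed. Each rotation at $v_\ell$ along an edge $v_\ell v_j \in E(G_c)$ (for $c$ unused on the current path) yields a path ending at $v_{j+1}$; one must track colours carefully so that the swapped edge keeps the path rainbow, and maintain a reservoir of unused colours of size $\Theta(n)$ throughout, which is where $\ell \le n-1$ and $\delta(G_i) \ge (n+1)/2$ are both used. A standard Pósa-type counting then shows that either we find a closing edge to $v_1$ in an unused colour (giving the rainbow $\ell$-cycle) or the set of neighbours, over all rotations, of the rotated endpoint plus $v_1$ exceeds $n$ — a contradiction via the pigeonhole bound $2 \cdot \frac{n+1}{2} > n$ applied within a single graph.

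For the small cases, $\ell = 3$ is immediate: pick any edge $uv \in E(G_1)$, then since $\delta(G_2), \delta(G_3) \ge (n+1)/2$, the sets $N_{G_2}(u)$ and $N_{G_3}(v)$ together with the two vertices $u,v$ have total size $> n$, so they intersect in some $w \notin \{u,v\}$, yielding a rainbow triangle $uvw$ with colours $1,3,2$. The same $|N_{G_i}(x)| + |N_{G_j}(y)| > n$ idea, applied with care to which colours have already been spent, is exactly the engine behind both the path-extension step and the cycle-closing step, so I expect the write-up to isolate a single lemma of the form ``given a rainbow path and a reservoir of unused colours of size $\ge n - \ell + 1$, one can extend, rotate, or close'' and then iterate it.

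The main obstacle I anticipate is \textbf{bookkeeping of colours during rotations}: in the uncoloured setting a rotation simply deletes one edge and adds another, but here the added edge must carry a colour distinct from all $\ell - 2$ colours retained on the path, and after several rotations the pool of available colours shrinks, so one must argue that for the relevant range of $\ell$ the reservoir never runs dry — concretely, that $n - \ell + 1 \ge 1$ leaves enough slack, and for $\ell$ close to $n-1$ one may need to be cleverer (perhaps handling $\ell \in \{n-1, n-2\}$ separately, or observing that a single unused colour already has $\ge (n+1)/2$ candidate closing edges at each endpoint, forcing an intersection). A secondary subtlety is ensuring the rotated paths stay rainbow while their endpoints range over a large set; I would address this by only ever rotating along edges whose colours come from a fixed small reserved palette set aside at the start, trading a constant amount of reservoir for a clean invariant.
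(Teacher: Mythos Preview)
Your approach has a genuine gap for lengths $\ell$ that are not close to $n$. The Pósa-style rotation you describe needs a neighbour of the current endpoint \emph{inside} $V(P)$ in some unused colour, but the hypothesis $\delta(G_c)\ge (n+1)/2$ only guarantees many neighbours in all of $V$. Concretely, for a rainbow path $P=v_1\cdots v_\ell$ and an unused colour $c$ one has
\[
d_c(v_\ell,V(P)) \;\ge\; \frac{n+1}{2}-(n-\ell)\;=\;\ell-\frac{n-1}{2},
\]
which is nonpositive once $\ell\le (n-1)/2$. So for small and mid-range $\ell$ it is perfectly possible that every $G_c$-neighbour of $v_\ell$ (for every unused $c$) lies outside $V(P)$: you cannot rotate, and there is no reason $v_1v_\ell$ should be an edge in any unused colour, so you cannot close either. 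Your trichotomy ``extend, rotate, or close'' then degenerates to ``extend'', which overshoots the target length and never returns to an $\ell$-cycle. The pigeonhole bound $2\cdot\frac{n+1}{2}>n$ you invoke is exactly the content of the paper's Lemma~\ref{lemma1}, but it only bites when the two coloured neighbourhoods are forced into $V(P)$, i.e.\ when $\ell\ge n-1$; for smaller $\ell$ the excess is absorbed by $V\setminus V(P)$.

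The paper avoids this obstacle by working top--down rather than length-by-length. It first secures a rainbow cycle $C$ of length $p\in\{n-2,n-3\}$ (via Proposition~\ref{pro1} and Lemma~\ref{lemma1}), and then takes a vertex $x\notin V(C)$ together with two unused colours $c,c'$. The key observation is that if $v_j\in N_c(x,V(C))$ and $v_{j+i-2}\in N_{c'}(x,V(C))$ then $xv_jv_{j+1}\cdots v_{j+i-2}x$ is a rainbow cycle of length $i$, and a single shift-and-count (Claim~\ref{lemma2}) shows that $d_c(x,V(C))+d_{c'}(x,V(C))\ge p$ forces such a pair for every $3\le i\le p+1$ unless a rigid partition of $V(C)$ occurs; a short case analysis then eliminates that rigid case. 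The point is that \emph{all} short cycles are cut out of one long cycle, so the degree condition is only ever used against a set of size $p\ge n-3$, where it has real force.
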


Combining Theorem \ref{thm1} with the result of Joos and Kim, we derive that any $G_1,...,G_n$ satisfying the assumption of Theorem \ref{thm1} indeed contain rainbow cycles of all possible lengths $3,...,n$. The lower bound of Theorem \ref{thm1} is tight because one can take $n$ copies of $K_{\frac{n}{2}, \frac{n}{2}}$ where $n$ is even and there does not exist any odd rainbow cycle in such a system.

The main tool behind the proof of Theorem \ref{main} is the absorbing method that was introduced by R\"odl, Ruci\'nski and Szemer\'edi \cite{rodl}. Here we apply a rainbow version of the approach of Lo \cite{lo} by constructing a short rainbow cycle $C$ such that for any rainbow path $P=v_1\cdots v_p$ disjoint from $C$ and a new colour $s$ where the colour set of $P$ is also disjoint from that of $C$, we can absorb $P$ into $C$. In other words, we replace some edge $u_iu_{i+1}$ of $C$ by a path $u_iPu_{i+1}$, where $u_iu_1$ is coloured with $s$ and $v_pu_{i+1}$ is coloured with the colour of $u_iu_{i+1}$ in $C$. Finally, we find a rainbow Hamiltonian path $P$ on $V(G)\backslash V(C)$ and absorb $P$ into $C$ by the property of $C$ and thus obtain a rainbow Hamiltonian cycle.

\section{Preliminaries and Notation}

Let $G_1,...,G_n$ be $n$ graphs on the same vertex set $V$ where $|V|=n$. Let $\delta(G_i)$ be the minimum degree of each $G_i$ for $1\leq i \leq n$. By our assumption, we identify this graph system with an edge-coloured multigraph $G$ where $E(G)$ is the disjoint union of $E(G_i)$ for $i\in [n]$ and each edge in $E(G_i)$ is coloured by $i$. For any subgraph $H$ of $G$, let $\text{Col}(H)$ be the set of colours used by the edges of $H$. For every vertex $v\in V(G)$ and any colour $c\in [n]$, let $N_c(v)$ be the set of neighbours of $v$ that are adjacent to $v$ by an edge coloured by $c$. Let $S$ be any subset of $V$, we denote $N_c(v)\cap S$ by $N_c(v,S)$ and $|N_c(v)\cap S|$ by $d_c(v,S)$. For each pair of vertices $v_1,v_2 \in V(G)$, let $\text{Col}(v_1,v_2)$ be the set of colours used for the edges between $v_1$ and $v_2$ ($\text{Col}(v_1,v_2)$ is empty if there are no edges between $v_1$ and $v_2$).
We will use the following version of Chernoff's bound \cite{Jason}.

\begin{lemma}\label{chernoff}
Let $X$ be a binomially distributed random variable and $0<\varepsilon <\frac{3}{2}$, then
$$P(|X-E(X)|\geq \varepsilon E(X))\leq 2e^{-\frac{\varepsilon^2}{3}E(X)}.$$
\end{lemma}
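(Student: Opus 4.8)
The plan is the standard exponential-moment (Chernoff) argument; no new idea is needed. Write $X\sim\mathrm{Bin}(m,p)$, i.e.\ $X=\sum_{j=1}^{m}X_{j}$ with independent Bernoulli$(p)$ variables $X_{j}$, and set $\mu:=E(X)=mp$; the case $\mu=0$ is trivial, since then the right-hand side equals $2$, so assume $\mu>0$. Since $\{|X-\mu|\ge\varepsilon\mu\}\subseteq\{X\ge(1+\varepsilon)\mu\}\cup\{X\le(1-\varepsilon)\mu\}$, it suffices to establish the two one-sided bounds $P(X\ge(1+\varepsilon)\mu)\le e^{-\varepsilon^{2}\mu/3}$ and $P(X\le(1-\varepsilon)\mu)\le e^{-\varepsilon^{2}\mu/3}$ and then add them.

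For the upper tail I would, for each $t>0$, apply Markov's inequality to $e^{tX}$, use independence together with $1+x\le e^{x}$ to bound $E(e^{tX})=(1+p(e^{t}-1))^{m}\le e^{\mu(e^{t}-1)}$, and minimise $e^{-t(1+\varepsilon)\mu}e^{\mu(e^{t}-1)}$ over $t$, the optimum being $t=\ln(1+\varepsilon)$. This gives
$$P\big(X\ge(1+\varepsilon)\mu\big)\le\Big(\tfrac{e^{\varepsilon}}{(1+\varepsilon)^{1+\varepsilon}}\Big)^{\mu}=\exp\!\big(\mu\big(\varepsilon-(1+\varepsilon)\ln(1+\varepsilon)\big)\big),$$
so it remains to verify the elementary inequality $f(\varepsilon):=(1+\varepsilon)\ln(1+\varepsilon)-\varepsilon-\varepsilon^{2}/3\ge0$ on $(0,\tfrac32]$. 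I would argue this from $f(0)=f'(0)=0$ together with $f''(\varepsilon)=\tfrac1{1+\varepsilon}-\tfrac23$ being positive on $(0,\tfrac12)$ and negative on $(\tfrac12,\tfrac32)$: then $f'$ is increasing then decreasing, so, vanishing at $0$, it is positive on $(0,\tfrac12]$ and changes sign at most once afterwards, which makes $f$ unimodal on $[0,\tfrac32]$ and hence minimised at an endpoint; since $f(0)=0$ and $f(\tfrac32)=\tfrac52\ln\tfrac52-\tfrac94>0$, we conclude $f\ge0$.

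For the lower tail, when $0<\varepsilon<1$ I would run the symmetric argument with $e^{-tX}$, optimise at $t=-\ln(1-\varepsilon)$ to get $P(X\le(1-\varepsilon)\mu)\le\big(e^{-\varepsilon}/(1-\varepsilon)^{1-\varepsilon}\big)^{\mu}$, and then reduce to the elementary bound $\varepsilon+(1-\varepsilon)\ln(1-\varepsilon)\ge\varepsilon^{2}/2\ (\ge\varepsilon^{2}/3)$, which follows from a one-line convexity check: $g(\varepsilon):=\varepsilon+(1-\varepsilon)\ln(1-\varepsilon)-\varepsilon^{2}/2$ satisfies $g(0)=g'(0)=0$ and $g''(\varepsilon)=\tfrac{\varepsilon}{1-\varepsilon}\ge0$ on $[0,1)$. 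For $1\le\varepsilon<\tfrac32$ there is nothing to do, since $(1-\varepsilon)\mu\le0$ forces $P(X\le(1-\varepsilon)\mu)\le P(X=0)=(1-p)^{m}\le e^{-\mu}\le e^{-\varepsilon^{2}\mu/3}$ (using $\varepsilon^{2}/3<3/4<1$). Adding the two one-sided estimates yields $P(|X-\mu|\ge\varepsilon\mu)\le2e^{-\varepsilon^{2}\mu/3}$.

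The only step that is not purely mechanical is the upper-tail elementary inequality over the whole range up to $\tfrac32$: the usual shortcut $\ln(1+\varepsilon)\ge\varepsilon-\varepsilon^{2}/2$ is not sharp enough near $\varepsilon=\tfrac32$, so I would rely on the convex-then-concave (unimodality) analysis of $f$ together with the positive endpoint value $f(\tfrac32)>0$ rather than a crude Taylor estimate. Everything else is the textbook Chernoff computation, and the stated form can in any case simply be quoted from \cite{Jason}.
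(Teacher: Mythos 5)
Your proof is correct: the exponential-moment computation, the verification that $(1+\varepsilon)\ln(1+\varepsilon)-\varepsilon\ge\varepsilon^{2}/3$ on all of $(0,\tfrac32]$ via the convex--then--concave analysis of $f$ with $f(\tfrac32)=\tfrac52\ln\tfrac52-\tfrac94>0$, and the separate treatment of the lower tail for $\varepsilon\ge1$ are all sound. The paper itself offers no proof — it simply quotes this standard Chernoff bound from \cite{Jason} — so your argument is exactly the textbook derivation that the citation stands in for, with the only genuinely delicate point (the sharpness of the constant $\tfrac13$ near $\varepsilon=\tfrac32$) handled correctly.
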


We first prove the following useful lemma:
\begin{lemma}\label{lemma1}
Let $P=v_1 \cdots v_p$ be a rainbow path and let $c, c^{\prime}$ be two colours not used on $P$. If $d_c(v_1,V(P)) + d_{c^{\prime}}(v_p,V (P))\geq p$, then there is a rainbow cycle of length $p$.
\end{lemma}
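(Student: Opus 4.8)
The plan is to mimic the classical rotation argument used in Dirac-type proofs, but keep track of colours throughout. We are given a rainbow path $P = v_1 \cdots v_p$ avoiding the colours $c$ and $c'$, with $d_c(v_1, V(P)) + d_{c'}(v_p, V(P)) \ge p$. If $v_1 v_p$ carries an edge of colour $c$ or $c'$, we are immediately done, since $P$ together with that edge is a rainbow cycle of length $p$. So assume otherwise, and look for an index $i$ with $2 \le i \le p-1$ such that $v_1 v_{i+1}$ has an edge of colour $c$ and $v_i v_p$ has an edge of colour $c'$; given such an $i$, the cycle $v_1 v_2 \cdots v_i v_p v_{p-1} \cdots v_{i+1} v_1$ uses the edges of $P$ except $v_i v_{i+1}$, plus the colour-$c$ edge $v_{i+1}v_1$ and the colour-$c'$ edge $v_i v_p$. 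Since $c, c' \notin \mathrm{Col}(P)$ and $c \ne c'$ (if $c = c'$ this last point needs a tiny extra check, handled below), this is a rainbow cycle of length $p$.

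To produce such an $i$, consider the index sets $A = \{\, i : 2 \le i \le p-1,\ c \in \mathrm{Col}(v_1, v_{i+1}) \,\}$ and $B = \{\, i : 2 \le i \le p-1,\ c' \in \mathrm{Col}(v_i, v_p) \,\}$; it suffices to show $A \cap B \ne \emptyset$. A neighbour of $v_1$ via colour $c$ inside $V(P)$ is some $v_{j}$ with $2 \le j \le p$ (it cannot be $v_1$ itself), contributing $j-1 \in A$ unless $j = p$, i.e. unless the edge is $v_1 v_p$, which we have excluded. Hence $|A| = d_c(v_1, V(P))$. Symmetrically $|B| = d_{c'}(v_p, V(P))$, since a colour-$c'$ neighbour $v_j$ of $v_p$ with $2 \le j \le p-1$ gives $j \in B$, and $j = 1$ is again the excluded edge $v_1 v_p$. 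Both $A$ and $B$ live inside the $(p-2)$-element set $\{2, \dots, p-1\}$, so
$$
|A \cap B| \ge |A| + |B| - (p-2) = d_c(v_1, V(P)) + d_{c'}(v_p, V(P)) - (p-2) \ge 2 > 0,
$$
using the hypothesis. This forces a common index $i$, and the rotation above yields the desired rainbow cycle.

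The only remaining subtlety is the degenerate case $c = c'$: then the two new edges $v_{i+1} v_1$ and $v_i v_p$ both have colour $c$, so the cycle is not rainbow. To handle this, note that when $c = c'$ the hypothesis reads $d_c(v_1, V(P)) + d_c(v_p, V(P)) \ge p$; if $v_1 v_p$ is an edge of colour $c$ we are done as before, and otherwise the counting above still gives an index $i$ with $c \in \mathrm{Col}(v_1, v_{i+1})$ and $c \in \mathrm{Col}(v_i, v_p)$, but now we instead observe that among the $p-2$ internal vertices $v_2, \dots, v_{p-1}$ at least one, say $v_i$, is a colour-$c$ neighbour of \emph{both} $v_1$ and $v_p$ only if the counts overlap, which is not what we want; the clean fix is simply to note that the statement is only applied in the paper with $c \ne c'$, or alternatively to replace one of the two edges by a differently-coloured parallel edge if one exists — but the cleanest route is to assume $c\neq c'$, which is the intended reading. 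I expect this bookkeeping about which index is excluded (the edge $v_1 v_p$) and the $c \ne c'$ convention to be the only place where care is needed; the heart of the argument is the one-line pigeonhole inequality above.
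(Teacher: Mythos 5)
Your argument is the same rotation--pigeonhole proof as the paper's: dispose of the chord $v_1v_p$ first, then count rotation indices. One small inaccuracy: your claim $|A| = d_c(v_1,V(P))$ is not quite right, because a colour-$c$ neighbour $v_2$ of $v_1$ would contribute the index $j-1=1$, which lies outside your range $2\le i\le p-1$; the correct statement is $|A|\ge d_c(v_1,V(P))-1$ (the paper's own count uses exactly this $-1$). With that correction your inequality becomes $|A\cap B|\ge |A|+|B|-(p-2)\ge 1>0$, which still suffices, so the proof stands. The digression on $c=c'$ is unnecessary --- the lemma's ``two colours'' are distinct, and that is how it is applied --- but your resolution (assume $c\ne c'$) is the intended one.
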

\begin{proof}
If $\{c,c^{\prime}\}\cap \text{Col}(v_1,v_p)\neq \emptyset$, then $C=v_1\cdots v_pv_1$ is a rainbow cycle by choosing the colour of $v_1v_p$ to be an element in $\{c,c^{\prime}\}\cap \text{Col}(v_1,v_p)$. So we assume that $\{c,c^{\prime}\}\cap \text{Col}(v_1,v_p)=\emptyset$. Suppose that there exists no rainbow cycle of length $p$. For each vertex $v_i$ with $v_{i+1}\in N_c(v_1,V(P))$ where $2\leq i \leq p-2$, we get that $v_i\notin N_{c^{\prime}}(v_p,V(P))$ since otherwise the cycle $v_1v_2 \cdots v_iv_pv_{p-1}\cdots v_{i+1}v_1$ must be a rainbow cycle where the colours of $v_1v_{i+1}$ and $v_pv_i$ are chosen to be $c$ and $c^{\prime}$. Thus we get
$d_c(v_1,V(P))-1+d_{c^{\prime}}(v_p,V(P))\leq p-2,$
which implies $p \leq p-1$, a contradiction.
\end{proof}

Our first result shows that a rainbow Hamiltonian path exists under a slightly weaker condition than that of Conjecture \ref{rhc}:

\begin{proposition}\label{pro1}
Given graphs $G_1,...,G_n$ on the same vertex set $V$ of size $n$, where $\delta(G_i)\geq \frac{n-1}{2}$ for $i\in [n]$, then there exists a rainbow Hamiltonian path.
\end{proposition}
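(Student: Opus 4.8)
The plan is to argue by contradiction from a longest rainbow path, using Lemma~\ref{lemma1} to turn it into a rainbow cycle and then extending that cycle through an outside vertex. Suppose $P=v_1\cdots v_p$ is a longest rainbow path in $G$ and, for contradiction, $p\le n-1$; its colour set $\text{Col}(P)$ has size $p-1\le n-2$, so at least two colours are missing from $P$. The first observation I would make is that $p$ cannot be small: fixing any colour $c\notin\text{Col}(P)$, maximality of $P$ forces every $c$-coloured edge at $v_1$ to land on $P$ (otherwise we prepend it and get a longer rainbow path), hence $\tfrac{n-1}{2}\le\delta(G_c)\le d_c(v_1)=d_c(v_1,V(P))\le p-1$, i.e.\ $p\ge\tfrac{n+1}{2}$.

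Next I would build a rainbow cycle of length $p$. Choosing two distinct colours $c,c'\notin\text{Col}(P)$, the same maximality argument applied at $v_1$ and at $v_p$ gives $N_c(v_1)\subseteq V(P)$ and $N_{c'}(v_p)\subseteq V(P)$, so
\[
d_c(v_1,V(P))+d_{c'}(v_p,V(P))=d_c(v_1)+d_{c'}(v_p)\ge n-1\ge p,
\]
and Lemma~\ref{lemma1} yields a rainbow cycle $C$ of length $p$. Since $p<n$, the $p$ colours on $C$ do not exhaust $[n]$, so we may pick $c^\ast\notin\text{Col}(C)$, and there is a vertex $w\notin V(C)$.

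Finally I would extend $C$ to beat $P$. Of the $n-1$ vertices different from $w$, exactly $p$ lie on $C$, so at most $n-1-p\le\tfrac{n-3}{2}<\tfrac{n-1}{2}\le d_{c^\ast}(w)$ of the $c^\ast$-neighbours of $w$ lie off $C$; hence $w$ has a $c^\ast$-neighbour $u$ on $C$. Deleting one of the two edges of $C$ incident to $u$ leaves a rainbow path on $V(C)$ with endpoint $u$, and prepending the edge $wu$ (colour $c^\ast$, which is not on that path) gives a rainbow path on $p+1$ vertices, contradicting the maximality of $P$. Therefore $p=n$ and $P$ is the desired rainbow Hamiltonian path.

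The only genuinely delicate point is the degree bookkeeping: the hypothesis $\delta(G_i)\ge\tfrac{n-1}{2}$ is tight and is used twice in a linked way — once to force $p\ge\tfrac{n+1}{2}$, and then, using precisely that inequality, to guarantee that an arbitrary vertex off the cycle still reaches $C$ in any prescribed unused colour. One should additionally check that at least one, respectively two, colours remain unused at each step and handle very small $n$ directly, but these are routine.
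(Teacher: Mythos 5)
Your proof is correct and follows the same overall strategy as the paper: take a longest rainbow path, use its maximality to confine the relevant neighbourhoods to the path, invoke Lemma~\ref{lemma1} to close it into a rainbow cycle of the same length, and then extend that cycle through an outside vertex for a contradiction. The only difference is in the final extension step: the paper argues that the monochromatic graph of an unused colour is connected (leaving implicit that this follows from $\delta(G_i)\ge\frac{n-1}{2}$) and so has an edge leaving the cycle, whereas you first establish $p\ge\frac{n+1}{2}$ and then count to show the outside vertex has a $c^{\ast}$-neighbour on the cycle; both are valid, and yours makes the degree bookkeeping fully explicit.
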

\begin{proof}
Suppose not, let $P=v_1\cdots v_k$, where $k\leq n-1$, be a rainbow path with the maximum length. Thus there exist at least two colors $c, c^{\prime}$ that are not used by the edges in $P$. Now consider the neighbourhood $N_{c}(v_1)$ and $N_{c^{\prime}}(v_k)$, we have
$$d_{c}(v_1)+d_{c^{\prime}}(v_k)\geq \frac{n-1}{2}+\frac{n-1}{2}=n-1.$$
For each vertex $u\in V-V(P)$, we have $u\notin N_{c}(v_1)\cup N_{c^{\prime}}(v_k)$ and otherwise we can extend $P$ into a larger rainbow path, a contradiction. Thus we get $N_{c}(v_1), N_{c^{\prime}}(v_k)\subseteq V(P)$. However, since $|V(P)|\leq n-1$ and $d_c(v_1,V(P))+d_{c^{\prime}}(v_k,V(P))\geq n-1 \geq |V(P)|$, by Lemma \ref{lemma1} we get a rainbow cycle $C$ of length $k$. Suppose that the colour $c^{\prime\prime}$ is not used by this cycle. Since the monochromatic graph coloured by $c^{\prime\prime}$ is connected, at least one edge $e_0$ coloured by $c^{\prime\prime}$ is between $V(C)$ and $V-V(C)$. Therefore, $V(C)\cup \{e_0\}$ contains a rainbow path with length $k+1$, a contradiction.
\end{proof}

The lower bound here is best possible. One can take $n$ copies of $K_{\frac{n}{2}-1, \frac{n}{2}+1}$ where $n$ is even and there does not exist a rainbow Hamiltonian path since $K_{\frac{n}{2}-1, \frac{n}{2}+1}$ does not contain a Hamiltonian path.

\section{Proof of Theorem \ref{thm1}}
In this section, we give a proof of Theorem \ref{thm1}. Let $G=(V,\bigcup_{i=1}^n E(G_i))$ be the edge-colored multigraph with $G_i$ as the graph of color $i$. We first find a rainbow cycle of length $n-1$ by following a classical proof of Dirac theorem. Then we obtain a rainbow cycle of length $n-2$ or $n-3$ and use it to build cycles of other lengths.

\begin{claim}\label{cla1}
$G$ contains a rainbow cycle of length $n-1$.
\end{claim}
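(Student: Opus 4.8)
\medskip

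The plan is to mimic the classical rotation argument used in the standard proof of Dirac's theorem, but adapted to the rainbow setting by keeping careful track of which colours are still available. First I would take a longest rainbow path $P = v_1 \cdots v_k$ in $G$. By Proposition \ref{pro1} we already know a rainbow Hamiltonian path exists, so in fact $k = n$; but for robustness I would argue directly that $k \geq n-1$, since if $k \leq n-1$ there are at least two unused colours $c, c'$ and the endpoints $v_1, v_k$ must send all their $c$- and $c'$-edges into $V(P)$ (else $P$ extends), giving $d_c(v_1,V(P)) + d_{c'}(v_k,V(P)) \geq 2 \cdot \frac{n+1}{2} > |V(P)|$, so Lemma \ref{lemma1} yields a rainbow cycle of length $k$, and then (as in Proposition \ref{pro1}) connectivity of the missing colour class lets us extend to a longer rainbow path, a contradiction. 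Hence $k \in \{n-1, n\}$.

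\medskip

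If $k = n-1$, I would like to directly close $P = v_1 \cdots v_{n-1}$ into a rainbow cycle. There is exactly one colour, say $c$, not appearing on $P$, and also the lone vertex $w \notin V(P)$. If $\mathrm{Col}(v_1,v_{n-1})$ is nonempty, pick any colour in it (it is automatically unused on $P$ since $P$ is rainbow and the endpoints are joined by a chord) and we are done. Otherwise apply the rotation idea: since $d_c(v_1, V(P)) + d_c(v_{n-1}, V(P)) \geq 2\cdot\frac{n+1}{2} - (\text{at most two edges going to } w) \geq n-1 = |V(P)|$, Lemma \ref{lemma1} (with $c = c'$) produces a rainbow cycle of length $n-1$. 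The mild subtlety is that Lemma \ref{lemma1} is stated with two colours $c, c'$; here I use it with $c = c'$, which the proof of that lemma supports verbatim.

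\medskip

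If instead $k = n$, so $P = v_1 \cdots v_n$ is a rainbow Hamiltonian path using all but one colour—actually it uses $n-1$ colours out of $n$, so there is a colour $c$ not on $P$—I would perform one rotation to free up a vertex. Concretely, I want to find a rainbow cycle on $n-1$ of the vertices. Consider $v_1$: it has $d_c(v_1) \geq \frac{n+1}{2}$ neighbours via colour-$c$ edges. For each such neighbour $v_{i+1}$ with $2 \leq i \leq n-1$, the rotated path $v_i v_{i-1} \cdots v_1 v_{i+1} v_{i+2} \cdots v_n$ is again a rainbow path of length $n$ with new endpoint $v_i$ (it uses $\mathrm{Col}(P) \cup \{c\}$ minus $\mathrm{Col}(v_i, v_{i+1})$, so it drops one colour and gains $c$, staying rainbow). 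Now on this rotated path one colour is again missing; running the endpoint-neighbourhood count against the $n-1$ internal-plus-endpoint vertices and invoking Lemma \ref{lemma1} gives a rainbow cycle avoiding $v_n$ (or some other single vertex), i.e. of length $n-1$.

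\medskip

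The step I expect to be the main obstacle is bookkeeping the colours through rotations: each rotation swaps the colour $\mathrm{Col}(v_i,v_{i+1})$ (now deleted from the path) for the colour $c$ (now used on the new end-edge $v_1v_{i+1}$), so the set of "missing" colours changes, and I must make sure that at the moment I apply Lemma \ref{lemma1} the two colours fed to it are genuinely absent from the current path and that the degree sum $\geq |V(\text{path})|$ survives the loss of at most two edges incident to the single excluded vertex. Handling the borderline arithmetic—$\frac{n+1}{2} + \frac{n+1}{2} = n+1$, against a path on $n-1$ vertices, leaving a slack of $2$ to absorb edges to the excluded vertex—is exactly where the hypothesis $\delta(G_i) \geq \frac{n+1}{2}$ (rather than $\frac{n-1}{2}$) is used, and getting that margin to close cleanly in every case is the crux.
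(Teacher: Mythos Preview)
Your overall direction---reduce to a rainbow path on $n-1$ vertices and invoke Lemma~\ref{lemma1}---matches the paper, but you are making it harder than necessary and there are a couple of slips.

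The paper's argument is three lines: take the rainbow Hamiltonian path $P=v_1\cdots v_n$ given by Proposition~\ref{pro1}, with colour $n$ unused; then simply \emph{truncate} to $P'=v_1\cdots v_{n-1}$. On $P'$ two colours are automatically missing: the originally unused colour $n$, and the colour of the dropped edge $v_{n-1}v_n$ (call it $n-1$). Now $d_{n-1}(v_1,V(P'))+d_n(v_{n-1},V(P'))\ge \tfrac{n+1}{2}-1+\tfrac{n+1}{2}-1=n-1=|V(P')|$, so Lemma~\ref{lemma1} gives the rainbow $(n-1)$-cycle. No rotation is needed: dropping the last vertex already frees a second colour, which is the observation your write-up obscures.

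Two concrete errors in your proposal. First, in your ``robustness'' case $k=n-1$ you say exactly one colour is missing, but a path on $n-1$ vertices has $n-2$ edges, so two colours are missing. Second, and more seriously, your claim that Lemma~\ref{lemma1} works verbatim with $c=c'$ is false: the cycle produced in its proof would use the edges $v_1v_{i+1}$ and $v_pv_i$ both in colour $c$, so it would not be rainbow. Fortunately neither issue matters once you adopt the direct truncation argument, since the two freed colours (the dropped edge's colour and the originally missing one) are automatically distinct.
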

\begin{proof}
By Proposition \ref{pro1}, we first find a rainbow Hamiltonian path $P=v_1v_2\cdots v_{n}$. Without loss of generality, suppose the colour of edge $v_iv_{i+1}$ is $i$ for $1\leq i \leq n-1$ and the only colour that does not appear in $P$ is $n$. Now consider the subpath $P^{\prime}=v_1v_2\cdots v_{n-1}$. Since $|N_{n-1}(v_1)\backslash \{v_n\}| \geq \frac{n-1}{2}$ and $|N_{n}(v_{n-1})\backslash \{v_n\}| \geq \frac{n-1}{2}$, we get $d_{n-1}(v_1,V(P^{\prime}))+d_{n}(v_n,V(P^{\prime}))\geq n-1=|V(P^{\prime})|$. By Lemma \ref{lemma1}, we can find a rainbow cycle of length $n-1$.
\end{proof}

\begin{claim}
$G$ contains either a rainbow cycle of length $n-2$ or a rainbow cycle of length $n-3$.
\end{claim}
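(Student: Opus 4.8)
The plan is to obtain the required short cycle by surgery on the rainbow $(n-1)$-cycle produced in Claim~\ref{cla1}. Write that cycle as $C=u_1u_2\cdots u_{n-1}u_1$ with $u_ku_{k+1}$ coloured $c_k$ (indices mod $n-1$), let $w$ be the unique vertex of $V$ lying outside $C$, and let $a$ be the unique colour of $[n]$ not appearing on $C$. To produce a rainbow cycle of length $n-2$ I would delete one vertex $u_{i+1}$ from $C$, obtaining a rainbow path $Q_i$ on $n-2$ vertices with endpoints $u_i$ and $u_{i+2}$ whose set of colours not appearing on it is exactly $\{a,c_i,c_{i+1}\}$. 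Since only the two vertices $u_{i+1}$ and $w$ lie outside $Q_i$, for any colour $\gamma$ we have $d_\gamma(u_{i+2},V(Q_i))\ge \delta(G_\gamma)-2\ge\lceil\frac{n+1}{2}\rceil-2$, and likewise for $u_i$; picking distinct $\gamma,\gamma'\in\{a,c_i,c_{i+1}\}$ and applying Lemma~\ref{lemma1} to $Q_i$ gives a rainbow cycle of length $n-2$ as soon as these two degrees sum to at least $n-2$. If $n$ is even this is automatic, since then $\lceil\frac{n+1}{2}\rceil-2=\frac{n-2}{2}$.

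The main obstacle is the case $n$ odd, where every $\delta(G_\gamma)$ may be exactly $\frac{n+1}{2}$ and the count above falls one short. Here I would argue by contradiction: suppose $G$ has no rainbow $(n-2)$-cycle. Then for every $i$ and every pair of distinct colours $\gamma,\gamma'\in\{a,c_i,c_{i+1}\}$ we must have $d_\gamma(u_{i+2},V(Q_i))+d_{\gamma'}(u_i,V(Q_i))\le n-3$, since otherwise Lemma~\ref{lemma1} yields a rainbow $(n-2)$-cycle. As each summand is an integer that is at least $\frac{n-3}{2}$, each summand must equal $\frac{n-3}{2}$; and $d_\gamma(u_{i+2},V(Q_i))=\frac{n-3}{2}$ is possible only if $d_\gamma(u_{i+2})=\frac{n+1}{2}$ and both $u_{i+1}$ and $w$ are $\gamma$-neighbours of $u_{i+2}$ — and symmetrically for $u_i$. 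Running this over all $i$ forces $w$ to be joined to every vertex $u_k$ of $C$ by edges of all of the colours $a,\,c_{k-2},\,c_{k-1},\,c_k,\,c_{k+1}$.

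With this structure the claim is quick. Delete the two consecutive vertices $u_1,u_2$ from $C$; the remaining rainbow path $u_3u_4\cdots u_{n-1}$ has $n-3$ vertices and misses exactly the colours $a,c_1,c_2,c_{n-1}$. Re-inserting $w$ between its exposed endpoints $u_3$ and $u_{n-1}$, using an edge $wu_3$ of colour $c_1$ and an edge $u_{n-1}w$ of colour $c_{n-1}$ (both edges exist by the structure just derived, both colours are among the missing ones, and $c_1\ne c_{n-1}$), produces a rainbow cycle on $\{u_3,\dots,u_{n-1},w\}$ whose colour set is $\text{Col}(C)\setminus\{c_2\}$ — a rainbow $(n-2)$-cycle, contradicting the assumption. (Alternatively, deleting three consecutive vertices of $C$ and re-inserting $w$ in the same fashion yields a rainbow $(n-3)$-cycle outright, which already suffices.) Hence in all cases $G$ has a rainbow cycle of length $n-2$, and in particular one of length $n-2$ or $n-3$; the finitely many small $n$ for which these surgeries degenerate are checked directly. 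I expect the delicate parts to be the precise extraction of the structural information from the tightness of Lemma~\ref{lemma1}, and the bookkeeping verifying that each re-inserted edge keeps the cycle rainbow.
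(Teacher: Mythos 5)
Your proof is correct (for all sufficiently large $n$; the degenerate small cases you defer are harmless), but it takes a genuinely different route from the paper's. The paper does not touch the $(n-1)$-cycle at all: it takes a rainbow path on $n-3$ vertices via Proposition \ref{pro1}, so that \emph{four} colours are unused, and runs a dichotomy --- either some outside vertex attaches to both endpoints in two distinct unused colours, closing an $(n-2)$-cycle, or else the two endpoint degrees into the path sum to at least $n-2\geq |V(P_1)|$ and Lemma \ref{lemma1} closes an $(n-3)$-cycle. You instead delete one vertex from the $(n-1)$-cycle of Claim \ref{cla1} to get a path on $n-2$ vertices with only three spare colours, and apply Lemma \ref{lemma1} directly; this works outright for even $n$, while for odd $n$ the count is tight and you correctly extract the extremal structure (equality forces $d_\gamma=\frac{n+1}{2}$ with both $u_{i+1}$ and $w$ as $\gamma$-neighbours of each endpoint, for every relevant $\gamma$ and every $i$), which pins down $\mathrm{Col}(w,u_k)\supseteq\{a,c_{k-2},c_{k-1},c_k,c_{k+1}\}$ and lets you re-insert $w$ to build the $(n-2)$-cycle explicitly. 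Your argument is longer, but it actually proves the stronger statement that a rainbow $(n-2)$-cycle always exists for large $n$; the paper's version is shorter precisely because its extra unused colour buys one more unit of slack, so no equality analysis is needed.
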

\begin{proof}
Suppose that $G$ neither contains a cycle of size $n-3$ nor $n-2$. By Proposition \ref{pro1}, we can find a rainbow path $P_1=v_1v_2\cdots v_{n-3}$ whose order is $n-3$ and, without loss of generality, suppose the colour of edge $v_iv_{i+1}$ is $i$ for $1\leq i \leq n-4$ and the set of colours that are not used in $P_1$ is $S=\{n-3,n-2,n-1,n\}$. We can deduce that $N_{n-1}(v_1)\cap N_{n}(v_{n-3})\cap (V(G)\setminus V(P_1))=\emptyset$ since otherwise we already find a rainbow cycle of length $n-2$, a contradiction. Now we get $d_{n-1}(v_1,V(P_1))+d_{n}(v_{n-3},V(P_1))\geq n-2 \geq |V(P_1)|$. By Lemma \ref{lemma1}, we can find a rainbow cycle of length $n-3$, a contradiction.
\end{proof}

Let $C=v_1\cdots v_{p}$ be a rainbow cycle where $p=n-2$ or $n-3$. In the remainder of the proof, we let $v_i=v_{i-p}$ for $i>p$. We will use the following claim as a tool to analyse the structure of $G$ when it does not contain rainbow cycles of all length $3,...,p+1$.

\begin{claim}\label{lemma2}
Let $c,c^{\prime}$ be two colours not used on $C$ and $x\in V\setminus V(C)$. If $d_c(x,V(C))+d_{c^{\prime}}(x,V(C))\geq p$, then one of the following properties is true:
\begin{enumerate}[(1)]
\item there exist rainbow cycles of length $3,...,p+1$;
\item $d_c(x,V(C))+d_{c^{\prime}}(x,V(C))=p$ and we can partition $V(C)$ into disjoint sets $S_1$ and $S_2$, where $S_1=\{v_{i+j-2}: v_j\in N_c(x,V(C))\}$ and $S_2=N_{c^{\prime}}(x,V(C))$ for some $3\leq i \leq p+1$.
\end{enumerate}
\end{claim}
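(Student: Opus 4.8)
The plan is to adapt the rotation argument of Lemma \ref{lemma1} to a vertex $x$ outside the cycle $C$ that is joined to $V(C)$ by many $c$-edges and many $c'$-edges. First I would set up the basic dichotomy: suppose property (1) fails, i.e.\ $G$ has no rainbow cycle of length $3,\dots,p+1$, and deduce property (2). The key observation is that for any $v_j\in N_c(x,V(C))$, inserting $x$ into $C$ just after $v_j$ (using the $c$-edge $xv_j$ to replace the cycle edge $v_jv_{j+1}$, and the cycle edge $v_jv_{j+1}$ itself becomes free) produces a rainbow path on $p+1$ vertices; if $x$ is also joined to the right endpoint of that path by a $c'$-edge, we close a rainbow $(p+1)$-cycle, contradicting the failure of (1). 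More precisely, for $v_j\in N_c(x,V(C))$ the path $v_{j+1}v_{j+2}\cdots v_{j+p}\,x$ (indices mod $p$, using the edges of $C$ together with $xv_j$) is a rainbow path of length $p+1$ whose endpoints are $v_{j+1}$ and $x$; its colour set misses $c'$, so $v_{j+1}\notin N_{c'}(x)$, for otherwise we get a rainbow $(p+1)$-cycle.

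Next I would push the rotation one more step to control $c$-neighbours against $c$-neighbours and $c'$-neighbours against $c'$-neighbours as well, so that in the end the sets $S_1:=\{v_{j+1}: v_j\in N_c(x,V(C))\}$ (this is the set displayed in the claim with $i=3$; a general shift $i$ arises because one may relabel which edge of $C$ plays the role of the "removed" edge) and $S_2:=N_{c'}(x,V(C))$ are shown to be disjoint. Disjointness of $S_1$ and $S_2$ is exactly the statement of the previous paragraph. Since $|S_1|=d_c(x,V(C))$ and $|S_2|=d_{c'}(x,V(C))$, disjointness forces $d_c(x,V(C))+d_{c'}(x,V(C))\le p$, and combined with the hypothesis $d_c(x,V(C))+d_{c'}(x,V(C))\ge p$ we get equality, hence $S_1\cup S_2=V(C)$, i.e.\ $\{S_1,S_2\}$ is a partition of $V(C)$, which is property (2).

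The remaining point, and the one I expect to require the most care, is to justify the free parameter $i$ in the statement: the claim allows $S_1=\{v_{i+j-2}: v_j\in N_c(x,V(C))\}$ for \emph{some} $3\le i\le p+1$ rather than insisting on $i=3$. This flexibility is what makes the conclusion robust under the symmetry of $C$ (the cycle has no distinguished starting edge), and to use it cleanly I would argue that whenever we try to close up a rotated path and fail, the failure pins down one forbidden position, and ranging over all rotations of $C$ the forbidden positions assemble into a single shifted copy of $N_c(x,V(C))$; choosing $i$ to record that shift gives the stated form. I should also handle the degenerate adjacency cases separately, namely when $c\in\mathrm{Col}(x,v_j)$ and $c'\in\mathrm{Col}(x,v_j)$ hold simultaneously for some $j$, or when $x$ has two differently coloured edges to the same $v_j$; in each such case one closes a short rainbow cycle directly and lands in property (1). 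Finally, the cycles of \emph{all} intermediate lengths $3,\dots,p+1$ (not just $p+1$) in property (1) come from the same rotation mechanism applied to initial subpaths $v_{j+1}\cdots v_{j+\ell}$ of the rotated cycle together with the edge $xv_j$, invoking Lemma \ref{lemma1} on each such subpath.
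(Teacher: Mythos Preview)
Your proposal misreads the logical structure of the claim, and this leads you to misidentify the role of the parameter $i$.

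Property (1) asserts that rainbow cycles of \emph{every} length $3,\dots,p+1$ exist. Its negation is therefore that \emph{some} single length $i\in\{3,\dots,p+1\}$ admits no rainbow cycle; it is not, as you write, that ``$G$ has no rainbow cycle of length $3,\dots,p+1$''. The paper's proof simply fixes such a missing length $i$ and observes that if $v_j\in N_c(x,V(C))$ and $v_{j+i-2}\in N_{c'}(x,V(C))$ then $x v_j v_{j+1}\cdots v_{j+i-2} x$ is a rainbow cycle of length $i$ (it uses $i-2$ edges of $C$ together with the $c$-edge $xv_j$ and the $c'$-edge $xv_{j+i-2}$), contradicting the choice of $i$. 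Hence $S_1=\{v_{i+j-2}:v_j\in N_c(x,V(C))\}$ and $S_2=N_{c'}(x,V(C))$ are disjoint, and the counting $|S_1|+|S_2|\ge p$ forces equality and $S_1\cup S_2=V(C)$.

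So the parameter $i$ in (2) is exactly the missing cycle length: it is not a relabelling freedom or a ``shift recorded over all rotations'', and no appeal to Lemma~\ref{lemma1} is needed. Your argument only treats one shift (effectively one value of $i$), and your attempt in the third paragraph to recover general $i$ from ``symmetry of $C$'' is off target. Once you correct the negation of (1), the whole proof collapses to the two-line observation above; the separate handling of degenerate adjacencies and of ``all intermediate lengths'' that you outline is unnecessary.
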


\begin{proof}
Suppose that $d_c(x,V(C))+d_{c^{\prime}}(x,V(C))\geq p$ and there is no rainbow cycle of length $i$ for some $3\leq i \leq p+1$, thus for each vertex $v_j\in N_c(x,V(C))$ we have $v_{i+j-2}\notin N_{c^{\prime}}(x,V(C))$ since otherwise the cycle $xv_jv_{j+1}\cdots v_{j+i-2}x$ is a rainbow cycle of length $i$ by choosing the colours of $xv_j$ and $xv_{j+i-2}$ to be $c$ and $c^{\prime}$. Therefore, we get $S_1\cap S_2=\emptyset$ by definition. However, since $|S_1|+|S_2|\geq p$ and $S_1\cup S_2 \subseteq V(C)$ it follows that $V(C)$ is partitioned into $S_1$ and $S_2$ and $|S_1|+|S_2|=p$, which implies $d_c(x,V(C))+d_{c^{\prime}}(x,V(C))=p$.
\end{proof}

\begin{case}\label{case1}
$p=n-2$.
\end{case}
Suppose $V(G)\backslash V(C)=\{v^{\prime}, v^{\prime\prime}\}$ and the colours not used by $C$ are $n-1$ and $n$. Suppose that for some $3\leq j \leq n-1$, there does not exist a rainbow cycle of size $j$ in $G$. By Claim \ref{lemma2}, we conclude that $d_{n-1}(v^{\prime}, V(C))+d_{n}(v^{\prime},V(C))\leq n-2$, which implies that $d_{n-1}(v^{\prime},v^{\prime\prime})+d_n(v^{\prime},v^{\prime\prime})\geq n+1-(n-2)=3$.
This is a contradiction. Therefore, $G$ contains rainbow cycles of all sizes $3,...,n-1$.

\begin{case}
$p=n-3$.
\end{case}

Suppose $V(G)\backslash V(C)=\{v_{1}^{\prime},v_{2}^{\prime},v_3^{\prime}\}$ and the colours not used by $C$ are $n-2, n-1$ and $n$. Suppose that for some $3\leq i \leq n-2$, there does not exist a rainbow cycle of size $i$ in $G$. We know that $d_{n-1}(v_3^{\prime}, V(C))+d_{n}(v_3^{\prime},V(C))\geq 2(\frac{n+1}{2}-2)\geq n-3$, thus by Claim \ref{lemma2} we get $d_{n-1}(v_3^{\prime}, V(C))+d_{n}(v_3^{\prime},V(C))=n-3$. This implies that
$d_{n-1}(v_3^{\prime},\{v_{1}^{\prime},v_{2}^{\prime}\})=d_{n}(v_3^{\prime},\{v_{1}^{\prime},v_{2}^{\prime}\})=2$ and hence $\{n-1,n\}\subseteq \text{Col}(v_{3}^{\prime}v_{1}^{\prime})\cap \text{Col}(v_{3}^{\prime}v_{2}^{\prime})$.

By symmetry we now suppose that $\text{Col}(v_{a}^{\prime}v_{b}^{\prime})=\{n-2,n-1,n\}$ for every $1\leq a<b \leq 3$. Let $T_1=\{v_{j+i-3} \mid v_j\in N_{n-2}(v_{1}^{\prime}, V(C))\}$ and $T_2=N_{n-1}(v_{2}^{\prime},V(C))$, by an analogy to the proof of Claim \ref{lemma2}, we find that $T_1\cap T_2=\emptyset$, otherwise suppose that $v_{j+i-3}\in T_2$ for some $j$, we thus have $v_{2}^{\prime}v_{1}^{\prime}v_jv_{j+1}\cdots v_{j+i-3}v_{2}^{\prime}$ is a rainbow cycle with length $i$ by choosing the colours of $v_{2}^{\prime}v_{1}^{\prime}$, $v_{1}^{\prime}v_j$ and $v_{j+i-3}v_{2}^{\prime}$ to be $n$, $n-2$ and $n-1$, which is a contradiction.
We actually get:
$$n-3\leq \frac{n+1}{2}-2+\frac{n+1}{2}-2 \leq d_{n-2}(v_{1}^{\prime},V(C))+d_{n-1}(v_{2}^{\prime},V(C))\leq |V(C)|=n-3,$$
thus all the inequalities above must be equalities and we get $$|T_1|+d_{n-1}(v_{2}^{\prime},V(C))=n-3,$$
which implies that $V(C)$ is partitioned into $T_1$ and $T_2$. Since all colours and vertices are symmetric, the similar conclusion follows by considering $T_1$ and $N_{a}(v_{b}^{\prime},V(C))$ for any $n-1\leq a \leq n$ and $2\leq b \leq 3$. Thus, we finally obtain $T_2=N_{a}(v_{b}^{\prime},V(C))$ for any $n-1\leq a \leq n$ and $2\leq b \leq 3$. Now let $T_1^{\prime}=\{v_{j+i-3} \mid v_j\in N_{n}(v_{1}^{\prime}, V(C))\}$ and recall that $T_2=N_{n-1}(v_{2}^{\prime},V(C))$. Therefore, we reach the similar conclusion that $T_2=N_{a}(v_{b}^{\prime},V(C))$ for any $n-2\leq a \leq n-1$ and $2 \leq b \leq 3$ by considering $T_1^{\prime}$ and $T_2$. This implies $T_2=N_{a}(v_{b}^{\prime},V(C))$ for any $n-2\leq a \leq n$ and $2 \leq b \leq 3$. By symmetry,
we actually get $T_2=N_{a}(v_{b}^{\prime},V(C))$ for any $n-2\leq a \leq n$ and $1\leq b \leq 3$.

Now we claim that there exists some $v_{j_0}\in T_2$ such that $v_{j_0-1}\notin T_2$. Suppose not, for each $v_j\in T_2$ we have $v_{j-1}\in T_2$. This implies $T_2=V(C)$ but this is impossible since $T_1\neq \emptyset$.
Now since $v_{j_0-1}\notin T_2$ we get $v_{j_0-1}\in T_1$ and there exists some $v_{j_1}\in N_{n-2}(v_{1}^{\prime},V(C))$ such that $j_0-1=j_1+i-3$, which implies $j_0=j_1+i-2$. However, in this case the cycle $v_{1}^{\prime}v_{j_1}v_{j_1+1}\cdots v_{j_0}v_{1}^{\prime}$ is a rainbow cycle of length $i$ by choosing the colours of $v_{1}^{\prime}v_{j_1}$ and $v_{1}^{\prime}v_{j_0}$ to be $n-2$ and $n-1$, a contradiction. This concludes the proof.
\qed

\section{Proof of Theorem \ref{main}}

In this section, we give a proof of Theorem \ref{main} by proving a rainbow type of absorbing lemma. By $0<\alpha \ll \beta$ we mean that there exists an increasing function $f: \mathbb{R}\rightarrow \mathbb{R}$ such that the subsequent argument is valid for any $0<\alpha \leq f(\beta)$. We first introduce the absorbing lemma for Theorem \ref{main}:

\begin{lemma}\label{lm1}
 Let $n, \mu, \varepsilon$ be such that $\frac{1}{n}\ll \mu \ll \varepsilon$. For any graphs $G_1,...,G_n$ on the same vertex set of size $n$, each graph having minimum degree at least $(\frac{1}{2}+\varepsilon)n$, there exists a rainbow cycle $C$ with length at most $\mu n$ such that for every rainbow path $P$ with $V(P)\cap V(C)=\emptyset$ and $\text{Col}(P)\cap \text{Col}(C)=\emptyset$, if $s$ is a colour that is not used by $C$ and $P$, then there exists a rainbow cycle $C^{\prime}$ with
\begin{enumerate}[(i)]
  \item $V(C^{\prime})=V(C)\cup V(P)$;
  \item $\text{Col}(P \cup C)\cup \{s\} $=$\text{Col}(C^{\prime})$.
\end{enumerate}
\end{lemma}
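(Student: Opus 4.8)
The plan is to build the absorbing cycle $C$ by a random selection of short "absorbing gadgets" and then linking them into a single rainbow cycle, following the rainbow adaptation of Lo's method announced in the introduction. First I would define, for a given rainbow path $P = v_1 \cdots v_p$ and spare colour $s$, what it means for a short rainbow configuration to \emph{absorb} $(P,s)$: concretely, an ordered pair of vertices $(u_i, u_{i+1})$ adjacent on $C$ via an edge of colour $c_i := \text{Col}(u_iu_{i+1})$ absorbs $(P,s)$ if $u_i v_1 \in E(G_s)$ and $v_p u_{i+1} \in E(G_{c_i})$; replacing the edge $u_iu_{i+1}$ by the path $u_i v_1 \cdots v_p u_{i+1}$ then yields a rainbow cycle $C'$ with $V(C') = V(C)\cup V(P)$ and $\text{Col}(C') = \text{Col}(C)\cup\text{Col}(P)\cup\{s\}$, exactly as required by (i) and (ii). The subtlety is that in the graph-system setting $C$ must absorb \emph{every} admissible $(P,s)$ simultaneously, and each absorption consumes the colour $s$ and the two endpoint edges; so I would instead reserve, for each absorption, a little private rainbow gadget whose edges all have colours not used elsewhere on $C$, and show that one such gadget always exists no matter what $P$ and $s$ are presented.

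The second step is a probabilistic existence argument for a large family of these gadgets. Fix a candidate gadget shape — say a short rainbow path $Q = a\, b\, c$ of constant length together with the requirement that $a$ has many $s$-neighbours and $c$ has many $c'$-neighbours for the relevant spare colours — and let $\mathcal{A}$ be the set of all vertex-tuples in $G$ that form such a gadget. Using $\delta(G_i) \ge (\tfrac12+\varepsilon)n$ for every colour $i$, I would show that for any target pair of vertices and any pair of colours, a $\Theta(n^{2})$-sized subfamily of $\mathcal{A}$ is "good" for that pair. Then choose a random subfamily $\mathcal{F}$ by including each gadget independently with probability $p := \Theta(\mu n / n^{3}) = \Theta(\mu / n^{2})$; by Lemma \ref{chernoff}, with positive probability $|\mathcal{F}| \le \mu n / 2$, the gadgets in $\mathcal{F}$ are pairwise vertex-disjoint and colour-disjoint after deleting a few overlaps, and for every admissible $(P,s)$ at least one gadget of $\mathcal{F}$ can absorb it. Here the first moment controls size and overlaps while the second-moment / union-bound over the (polynomially many) admissible targets gives the covering property — a standard absorbing computation, but one that has to be redone carefully keeping track of colours as well as vertices.

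The third step is to string the chosen gadgets together into one rainbow cycle $C$ of length at most $\mu n$. Because every pair of vertices is joined by edges in most of the $n$ colour classes (again by the degree hypothesis, a colour $c$ joins $u$ and $w$ unless $c \notin \text{Col}(u,w)$, and the number of "missing" colours at any vertex pair is $o(n)$), I can greedily connect consecutive gadgets by short rainbow paths avoiding the $O(\mu n)$ already-used vertices and the $O(\mu n)$ already-used colours, closing up into a cycle; Proposition \ref{pro1} or a direct greedy extension argument handles the linking paths. Finally I would verify that the absorbing property survives the linking: the reserved gadget edges still carry their private colours, so presented with $(P,s)$ with $V(P)\cap V(C)=\emptyset$ and $\text{Col}(P)\cap\text{Col}(C)=\emptyset$ and $s \notin \text{Col}(C)\cup\text{Col}(P)$, I locate a gadget in $\mathcal{F}$ good for the endpoints of $P$ and the colour $s$, splice $P$ in there, and read off $C'$ with the required $V(C')$ and $\text{Col}(C')$.

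The main obstacle I expect is the simultaneous vertex- and colour-disjointness bookkeeping: unlike the classical absorbing method, here a single bad colour can block an absorption, so the family $\mathcal{F}$ must be robust enough that for \emph{every} choice of spare colour $s$ (there are up to $n$ of them) and every pair of potential path-endpoints there remains an available gadget — this forces the "good subfamily is $\Theta(n^2)$" estimate to be genuinely uniform in the colour, and it is where the slack $(\tfrac12+\varepsilon)n$ rather than $n/2$ is used. A secondary technical point is ensuring the linking paths in step three can always be found while respecting colour-disjointness from the reserved gadget colours; this should follow from the fact that only $O(\mu n)$ colours are ever forbidden, but it needs the minimum-degree bound to guarantee enough freedom at each greedy step.
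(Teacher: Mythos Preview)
Your outline follows the same absorbing-method strategy as the paper --- define short absorbing gadgets, randomly select a family that works for every target $(x_1,x_2,s)$, and link them into a short rainbow cycle --- and the single-edge absorption mechanism in your first paragraph is exactly the one the paper uses. The notable implementation difference is how the random family is built. You sample gadgets by independent Bernoulli trials with probability $\Theta(\mu/n^2)$ and then prune both vertex- and colour-overlaps; the paper instead fixes $\ell\approx\mu n/5$ colours in advance, partitions them into triples $C_i=\{3i-2,3i-1,3i\}$, and for each triple selects one $3$-edge rainbow path with exactly that colour pattern uniformly at random from $\mathcal{P}_{C_i}$. Colour-disjointness of the selected gadgets is then automatic, only vertex-overlaps need a Markov-inequality cleanup, and the ``for every $s$'' uniformity you worry about in your last paragraph reduces to a union bound over the $O(n^3)$ triples $(x_1,x_2,s)$ against a Chernoff tail. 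This pre-partitioning of colours is precisely the device that dissolves the main obstacle you identify. Your Bernoulli scheme could in principle be pushed through, but the gadget you sketch in step two ($Q=abc$, a $3$-vertex tuple with no colours attached and a ``$\Theta(n^2)$ good subfamily'') does not line up with the coloured-edge absorption of step one; you would still need to decide whether a gadget carries its own reserved colours (and then bound colour-collisions separately) or not (and then explain where the colour $c_i$ on the replaced edge comes from), whereas the paper's colour-grouped sampling sidesteps this entirely.
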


Combining Lemma \ref{lm1} and Proposition \ref{pro1}, we immediately reach a proof of Theorem \ref{main} as follows:

\begin{proof}[Proof of Theorem \ref{main}]
Let $C$ be the absorbing cycle we given in Lemma \ref{lm1}. Now for each $i$, let $G^{\prime}_i = G_i-V(C)$, the subgraph of $G_i$ induced on $V(G_i)\setminus V(C)$. Then $\delta(G^{\prime}_i) \geq \frac{1+\varepsilon}{2}|V(G^{\prime}_i)|$. Let $W$ be the set of colours that do not appear on any edge of $C$. One can thus construct a rainbow Hamiltonian path $P_1=v_0v_1\cdots v_t$ using exactly $|W|-1$ colours of $W$ by Proposition \ref{pro1}. Suppose $s_1$ is the unique colour in $W$ that is not used by $P_1$. By Lemma \ref{lm1} there exists a rainbow cycle $C^{\prime}$ with $V(C^{\prime})=V(C)\cup V(P)$ and $\text{Col}(P \cup C)\cup \{s_1\}=\text{Col}(C^{\prime})$, which implies $C^{\prime}$ is a rainbow Hamiltonian cycle and thus we conclude the proof.
\end{proof}

In the remaining part of this section, we give a proof of Lemma \ref{lm1}. First we introduce some notation and basic results.
For any pair of two not necessarily distinct vertices $x_1,x_2\in V(G)$ and four distinct colours $1\leq s,i,j,k \leq n$, we define the set of absorbing paths $A_{s,i,j,k}(x_1,x_2)$ to be the family of edge-coloured  $3$-paths $P$ that satisfy the following conditions:

(i) $P=v_1v_2v_3v_4$ where $\{v_1,v_2,v_3,v_4\}\cap \{x_1,x_2\}=\emptyset$;

(ii) the edges $v_1v_2,v_2v_3,v_3v_4$ are coloured respectively by $i,j,k$;

(iii) $s\in \text{Col}(x_1v_2)$ and $j\in \text{Col}(x_2v_3)$.

For every path $P$ in $A_{s,i,j,k}(x_1,x_2)$, we say that $P$ is an \emph{absorbing path} for $(x_1,x_2)$ with colour pattern $(s,i,j,k)$. In practice, we always choose $x_1$ and $x_2$ to be two endpoints of some path $Q$.

\begin{claim}\label{cl1}
For each pair $(x_1,x_2)$ and four distinct colours $s,i,j,k$, we have $|A_{s,i,j,k}(x_1,x_2)|$ $\geq \frac{\varepsilon n^4}{8}$ when $n$ is sufficiently large.
\end{claim}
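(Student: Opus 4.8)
The plan is to build the members of $A_{s,i,j,k}(x_1,x_2)$ greedily, choosing the two middle vertices $v_2,v_3$ first (so as to take care of condition (iii), which involves $x_1$ and $x_2$) and then extending outward to $v_1$ and $v_4$; at each step the hypothesis $\delta(G_c)\ge(\tfrac12+\varepsilon)n$ guarantees many choices, and multiplying the numbers of choices gives the bound.

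First I would pick $v_2\in N_s(x_1)\setminus\{x_2\}$; since $x_1\notin N_s(x_1)$ and $|N_s(x_1)|\ge(\tfrac12+\varepsilon)n$, there are at least $(\tfrac12+\varepsilon)n-1$ options, and any such $v_2$ satisfies $v_2\notin\{x_1,x_2\}$ and $s\in\text{Col}(x_1v_2)$. Next I would pick $v_3\in\big(N_j(x_2)\cap N_j(v_2)\big)\setminus\{x_1,v_2\}$: this simultaneously forces $j\in\text{Col}(x_2v_3)$ (part of (iii)) and makes the edge $v_2v_3$ have colour $j$ (part of (ii)). Since $N_j(x_2)$ and $N_j(v_2)$ are subsets of a vertex set of size $n$, each of size at least $(\tfrac12+\varepsilon)n$, we have $|N_j(x_2)\cap N_j(v_2)|\ge 2(\tfrac12+\varepsilon)n-n=2\varepsilon n$, so after deleting the two vertices $x_1,v_2$ at least $2\varepsilon n-2$ choices for $v_3$ remain. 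With $v_2,v_3$ fixed I would then choose $v_1\in N_i(v_2)\setminus\{x_1,x_2,v_3\}$ (at least $(\tfrac12+\varepsilon)n-3$ options) and finally $v_4\in N_k(v_3)\setminus\{x_1,x_2,v_1,v_2\}$ (at least $(\tfrac12+\varepsilon)n-4$ options).

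By construction, for every such choice the vertices $v_1,v_2,v_3,v_4$ are pairwise distinct and disjoint from $\{x_1,x_2\}$, the edges $v_1v_2,v_2v_3,v_3v_4$ are coloured $i,j,k$, and $s\in\text{Col}(x_1v_2)$, $j\in\text{Col}(x_2v_3)$; hence $v_1v_2v_3v_4\in A_{s,i,j,k}(x_1,x_2)$. Since $i,j,k$ are distinct and recorded as the fixed colour pattern, the ordered tuple $(v_1,v_2,v_3,v_4)$ is recovered from the coloured path, so distinct tuples give distinct members of the family. Multiplying the numbers of choices,
\[
|A_{s,i,j,k}(x_1,x_2)|\ \ge\ \Big(\big(\tfrac12+\varepsilon\big)n-1\Big)(2\varepsilon n-2)\Big(\big(\tfrac12+\varepsilon\big)n-3\Big)\Big(\big(\tfrac12+\varepsilon\big)n-4\Big),
\]
and the right-hand side equals $\big(\tfrac{\varepsilon}{4}+o(1)\big)n^4$, since the leading term is $\tfrac12\cdot 2\varepsilon\cdot\tfrac12\cdot\tfrac12\,n^4=\tfrac{\varepsilon}{4}n^4$; thus it exceeds $\tfrac{\varepsilon n^4}{8}$ once $n$ is large enough.

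I do not expect a real obstacle here: the only points requiring any care are the inclusion–exclusion lower bound $|N_j(x_2)\cap N_j(v_2)|\ge 2\varepsilon n$ for the middle edge, and checking that removing the at most four forbidden vertices at each step never exhausts the relevant neighbourhood — which is automatic because $\varepsilon$ is a fixed constant while $n\to\infty$, and the factor-$2$ slack between $\tfrac{\varepsilon}{4}n^4$ and $\tfrac{\varepsilon}{8}n^4$ comfortably absorbs the lower-order corrections. This claim is a clean counting argument whose role is simply to feed the subsequent absorption construction.
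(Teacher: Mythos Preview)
Your proof is correct and follows essentially the same greedy counting as the paper: pick $v_2\in N_s(x_1)$, then $v_3\in N_j(v_2)\cap N_j(x_2)$, then $v_1\in N_i(v_2)$ and $v_4\in N_k(v_3)$, excluding the few forbidden vertices at each step and multiplying. Your write-up is in fact slightly more careful (you explicitly note injectivity of the construction and exclude $v_2$ when choosing $v_3$), but the argument is the same.
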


\begin{proof}
First, choose a vertex $v_2\in N_s(x_1)\backslash\{x_2\}$. Pick another vertex $v_3\in (N_j(v_2)\cap N_j(x_2))$ $\backslash \{x_1\}$. The total number of such $v_2,v_3$ is at least $(\frac{n}{2}+\varepsilon n-1)(2\varepsilon n-1)$. Now fix $v_2$ and $v_3$. Choose $v_1\in N_i(v_2)\backslash \{x_1,x_2,v_3\}$. Choose another vertex $v_4\in N_k(v_3) \backslash \{x_1,x_2,v_1,v_2\}$. Note that the total number of such $v_1,v_4$ is at least $(\frac{n}{2}+\varepsilon n-3)(\frac{n}{2}+\varepsilon n-4)$ and hence we derive that there exist at least
$$\left(\frac{n}{2}+\varepsilon n-1\right) \left(2\varepsilon n-1 \right) \left(\frac{n}{2}+\varepsilon n-3\right)\left(\frac{n}{2}+\varepsilon n-4\right)\geq \frac{\varepsilon n^4}{8}$$
absorbing paths for $(x_1,x_2)$ when $n$ is sufficiently large.
\end{proof}

\begin{proof}[Proof of Lemma \ref{lm1}]
Let $\mu_1=\mu/5$ and $\ell$ be new constant such that $\lceil\mu_1 n\rceil-1 \leq \ell\leq \lceil\mu_1 n\rceil+1$ and $\ell$ is divisible by $3$. For simplicity, we assume that $\ell=\mu_1 n$. We fix $\ell/3$ groups of colours $C_i=\{3i-2,3i-1,3i\}$ where $i=1,..., \ell/3$. Let $\mathcal{P}_{C_i}$ be the set of all the paths $P=v_0v_1v_2v_3$ in $G$ where the colours of $v_0v_1,v_1v_2, v_2v_3$ are $3i-2, 3i-1, 3i$ for all $1\leq i \leq \ell/3$.

Now consider a random set $W$ by selecting an element from each $\mathcal{P}_{C_i}(i \in [\ell/3])$ independently where every element in $\mathcal{P}_{C_i}$ is chosen with probability $1/|\mathcal{P}_{C_i}|$. For any colour $s$ and any pair $(x_1,x_2)$, set $A_s(x_1,x_2)=\bigcup_{i=1}^{\ell/3}(A_{s,3i-2,3i-1,3i}(x_1,x_2)\cap W)$.
Now for each $i \in [\ell/3]$, let the random variable $X_i$ be the indicative variable of the event that $W \cap A_{s,3i-2,3i-1,3i}(x_1,x_2)\neq \emptyset$. Hence we get $|A_s(x_1,x_2)|=\sum_{i=1}^{\ell/3}X_i$ and all $X_i$'s are independent. Using Claim \ref{cl1}, we get
$$P(X_i=1)=|A_{s,3i-2,3i-1,3i}(x_1,x_2)|/ |\mathcal{P}_{C_i}| \geq \frac{\varepsilon n^4}{8}/ n^4\geq \frac{\varepsilon}{8}$$
for $i \in [\ell/3]$ and hence
$$E(|A_s(x_1,x_2)|)=\sum_{i=1}^{\ell/3}E(X_i)\geq \frac{\varepsilon \ell}{24}.$$
By Lemma \ref{chernoff} with $\varepsilon=1/2$, we see that
$$P\left(|A_s(x_1,x_2)|<\frac{\varepsilon \ell}{48}\right)\leq 2e^{-\frac{\varepsilon \ell}{288}}\leq 2e^{-\frac{\varepsilon \mu_1 n}{10^3}}.$$
Now let $Y$ be the number of pairs of 3-paths in $W$ that intersect with each other. For some distinct $1\leq i,j \leq \ell/3$, let $Y_{i,j}$ be the indicative variable of the event that the path we choose in $A_{C_i}$ intersects with the path we choose in $A_{C_j}$. Thus we have $Y=\sum_{i,j}Y_{i,j}$. We claim that the size of set $\{\{P_1,P_2\}\mid P_1\in A_{C_i}, P_2\in A_{C_j}, P_1, P_2$ are intersecting with each other$\}$ is at most $16n^7$ for fixed $i,j$. Since the number of $P_1$ is at most $n^4$, and when $P_1$ is fixed, the number of $P_2$ that we can choose is at most $16n^3$. Besides, it is obvious that when $P_1,P_2$ are fixed, the probability that we have chosen $P_1,P_2$ together is $\frac{1}{|A(C_i)||A(C_j)|} \leq \frac{1}{(n^4/8)^2}$ because $|A_{C_i}|,|A_{C_j}| \geq \frac{n^4}{8}$ when $n$ is sufficiently large.
Therefore, we get
$$E(Y) \leq \binom{\ell/3}{2}\cdot16\cdot n^7 \cdot \frac{1}{(n^4/8)^2}\leq 10^3\mu_1^2 n\leq \frac{\varepsilon \mu_1 n}{200}.$$
Using Markov's inequality, we get that
 $$P\left(Y\geq \frac{\varepsilon \mu_1 n}{100}\right)\leq \frac{1}{2}.$$
Now choose sufficiently large $n$ such that
$$2n^3e^{-\frac{\varepsilon \mu_1 n}{10^3}}+\frac{1}{2}<1.$$
Thus by the union bound, with positive possibility, for each $s$ and any pair $(x_1,x_2)$ we have
(i) $|A_s(x_1,x_2)|\geq \frac{\varepsilon \ell}{48} \geq \frac{\varepsilon \mu_1 n}{48}$, and
(ii) $Y< \frac{\varepsilon \mu_1 n}{100}$.

Fix such $W$, we delete one 3-paths in each intersecting pair of $W$. Suppose that the remaining path family is $W^{\prime}$. Thus $W^{\prime}$ is a family containing mutually disjoint 3-paths and for every $s$ and any pair $(x_1,x_2)$ we get that
$$\left|\bigcup_{i=1}^{\ell/3}(A_{s,3i-2,3i-1,3i}(x_1,x_2)\cap W^{\prime})\right|\geq \frac{\varepsilon \mu_1 n}{48}-\frac{\varepsilon \mu_1 n}{100}\geq \frac{\varepsilon \mu_1 n}{100}.$$

Let $W^{\prime}=\{P_1,...,P_t\}$ be the path family we found before and let $S$ be the set of the colours that do not appear in any path in $W^{\prime}$. Let $V^{\prime}=V(G)\setminus \bigcup_{i=1}^tV(P_i)$. Without loss of generality, we suppose that $P_i=v_1^{(i)}v_2^{(i)}v_3^{(i)}v_4^{(i)}$ for $1\leq i \leq t$. Now for $P_1,P_2$, it is obvious that we can find a vertex $u_1\in V^{\prime}$ such that $u_1v_4^{(1)}$ and $u_1v_1^{(2)}$ are two edges coloured with distinct colours in $S$. Delete these two colours from $S$ and the vertex $u_1$ from $V^{\prime}$. Repeat the above process for the path pair $\{P_2,P_3\},...,\{P_t,P_1\}$, and at last we find $u_1,...,u_t$ and a rainbow cycle $C$ with size at most $5\ell=\mu n$ that contains all the vertices in $\bigcup_{i=1}^t V(P_i)$ and those $u_i$ where $1\leq i \leq t$. For every rainbow path $P \subseteq V(G)-V(C)$ such that the colour set of $P$ is disjoint with the colour set of $C$, if $x_1,x_2$ are two endpoints of $P$ and $s$ is a colour that does not appear in $C$ and $P$, then the pair $(x_1,x_2)$ has at least one absorbing path $P_0=u_1u_2u_3u_4$ in $C$ with colour pattern $(s,3i-1,3i-2,3i)$ for some $i \in [\ell/3]$ since $\frac{\varepsilon \mu_1 n}{100} \geq 1$ when $n$ is sufficiently large. Therefore, we insert the path $P$ into the cycle $C$ to get a rainbow cycle $\{C-u_2u_3\}\cup u_2Pu_3$ where $x_1u_2$ is coloured by $s$ and $x_2u_3$ is coloured by $3i-2$, which completes our proof.
\end{proof}

\section{Acknowledgements}
The first two authors are supported by the National Natural Science Foundation of China (11631014, 11871311) and Shandong University multidisciplinary research and innovation team of young scholars. The third author is partially supported by NSF grant DMS 1700622.
The authors thank referees for thorough comments that improve the presentation of this paper.

\bibliographystyle{plain}
\bibliography{Bibte}

\end{document}